\newcommand{\Ind}{\mathrm{Ind}}
\newcommand{\alg}{\mathrm{alg}}
\newcommand{\Hom}{\mathrm{Hom}}
\newtheorem{theorem}{Theorem}[section]
\newtheorem{prop}[theorem]{Proposition}
\newtheorem{cor}[theorem]{Corollary}
\theoremstyle{definition}
\newtheorem{example}[theorem]{Example}
\theoremstyle{remark}
\numberwithin{equation}{section}
\renewcommand{\top}{{\mathrm{top}}}
\newcommand{\ol}{\overline}
\newcommand{\field}{\mathbb}
\newcommand{\C}{{\field C}}
\newcommand{\R}{{\field R}}
\newcommand{\lam}{\lambda}
\newcommand{\AV}{\mathrm{AV}}
\newcommand{\CC}{\mathrm{CC}}
\renewcommand{\H}{\mathrm{H}}
\newcommand{\triv}{1 \! \! 1}
\newcommand{\sgn}{\mathrm{sgn}}
\newcommand{\lra}{\longrightarrow}
\newcommand{\wt}{\widetilde}
\newcommand{\bs}{\backslash}
\renewcommand{\-}{\! - \!}
\newcommand{\beq}{\begin{equation}}
\newcommand{\eeq}{\end{equation}}
\newcommand{\U}{\mathrm{U}}
\newcommand{\Sp}{\mathrm{Sp}}
\newcommand{\st}{\mathrm{st}}
\newcommand{\caSP}{\mathcal{S}\mathcal{P}}
\newcommand{\SP}{\caSP}
\newcommand{\SO}{\mathrm{SO}}
\newcommand\vG{^\vee\!G}
\newcommand{\frg}{\mathfrak{g}}
\newcommand{\frh}{\mathfrak{h}}
\newcommand{\frk}{\mathfrak{k}}
\newcommand{\frl}{\mathfrak{l}}
\newcommand{\frn}{\mathfrak{n}}
\newcommand{\frp}{\mathfrak{p}}
\newcommand{\frs}{\mathfrak{s}}
\newcommand{\bbC}{\mathbb{C}}
\newcommand{\bbR}{\mathbb{R}}
\newcommand{\bbZ}{\mathbb{Z}}
\newcommand{\caB}{\mathcal{B}}
\newcommand{\caD}{\mathcal{D}}
\newcommand{\caO}{\mathcal{O}}
\newcommand{\caP }{\mathcal{P}}
\newcommand{\caS}{\mathcal{S}}
\newcommand{\caW}{\mathcal{W}}
\def\ad {\mathop{\hbox {ad}}\nolimits}
\def \det {\mathrm{det}}
\def\Ind {\mathop{\hbox{Ind}}\nolimits}
\renewcommand{\ker}{\mathrm{ker}}
\begin{document}

\subjclass[2000]{Primary 22E47, Secondary 11F70}

\title[Stable Combinations]
{Stable Combinations of Special Unipotent Representations}
\author{Dan M.~Barbasch}
\author{Peter E.~Trapa}

\address{Department of Mathematics, Cornell University, Ithaca, NY 14853, USA} 

\email{dmb14@cornell.edu}
\thanks{The first author was partially supported by NSF grants DMS-0967386
and DMS-0901104}

\address{Department of Mathematics, University of Utah, Salt Lake
  City, UT 84112, USA} 

\email{ptrapa@math.utah.edu} 

\thanks{The second author was partially supported by 
  NSF grants DMS-0554118 and DMS-0968060.}

\dedicatory{To Gregg Zuckerman, with respect and admiration}

\begin{abstract}
We define and study a class of virtual characters which
are stable in the sense of Langlands and Shelstad.  These
combinations are associated to nonspecial nilpotent orbits 
in certain even ``special pieces'' of
the Langlands dual,
and are defined in terms of characteristic cycles of perverse sheaves
on dual partial flag varieties.  Our results
generalize  earlier work of Adams, Barbasch, and Vogan.
\end{abstract}

\maketitle
\section{Introduction}
\label{s:intro}

In \cite{a1}--\cite{a2}, Arthur outlined a set of conjectures describing
the automorphic spectrum of semisimple Lie group over a local field.  
He suggested that the set of automorphic representations
is arranged into (possibly overlapping) packets satisfying
a number of properties.  In particular, each packet was predicted
to give rise to a canonical linear combination of its elements 
whose
character was stable in the sense of Langlands and Shelstad
\cite{lang}, \cite{lang-shel}.

In the real case, Arthur's predictions are made precise, refined, and
in many cases established in \cite{bv} and, most completely, 
in \cite{abv}.  Many of Arthur's conjectures can be reduced to the
case of a certain (precisely defined) set of special unipotent representations.
This set is  a union of Arthur packets, and since each
Arthur packet gives rise to a stable virtual representation, one thus
obtains a collection of stable linear combinations of special
unipotent representations.  One is naturally led to ask if their span
exhausts the space of stable virtual special unipotent
representations.  Simple examples show this is too naive.  For
example, in the complex case (where stability is empty) Arthur packets
are typically not singletons.  So the question becomes: can one give
a canonical basis of stable virtual special unipotent representations
which accounts for these ``extra'' stable sums?

Under certain natural hypotheses we give a positive answer in terms
of the geometry of ``special pieces'' of nilpotent cone of the Langlands dual Lie algebra.  
Recall (from \cite{spalt}) that if $\caO'$ is a nilpotent adjoint orbit
for a complex reductive Lie algebra, there is a unique special orbit $\caO$
of smallest dimension which contains $\caO'$ in its closure.  The collection of
all $\caO'$ for which $\caO$ is this unique orbit is called the special piece of
nilpotent cone parametrized by $\caO$.  We denote it $\SP(\caO)$.  The special
pieces form a partition of the set of nilpotent adjoint orbits indexed by
the special orbits.

In order to formulate our main results, like Theorem \ref{t:main} below, a number of
technicalities must be treated with care.  A significant complication,
as in \cite{abv}, is that one cannot work with a single real form 
individually, but instead must work with an inner class of them
simultaneously.  (We begin recalling the relevant details in Section
\ref{s:unip}.)  
In spite of these technicalities, some consequences of our results are easy to state and
have nothing to do with real groups.  For example, suppose $\frg$ is a complex
semisimple Lie algebra with adjoint group $G$.  Fix a Cartan subalgebra $\frh$ and a system of positive
roots $\Delta^+ = \Delta^+(\frh,\frg)$, and write $\frn = \bigoplus_{\alpha \in \Delta^+} \frg_\alpha$.
Write $W$ for the Weyl group of $\frh$ in $\frg$.  For $w \in W$, write
$\frn^w = \bigoplus_{\alpha \in \Delta^+} \frg_{w\alpha}$.  Then for each $w \in W$ there is always a dense nilpotent adjoint
orbit $\caO(w)$ contained in 
\[
G\cdot(\frn \cap \frn^w).
\]
A closely related variant of the map
$w \mapsto \caO(w)$ was studied by Steinberg in~\cite{steinberg}.
It is natural to ask if the map admits a canonical section.
That is, given a nilpotent adjoint orbit $\caO$, can one canonically
define a Weyl group element $w \in W$ such that $\caO$ is dense in
$G\cdot(\frn \cap \frn^w)$?

For example, suppose $\caO$ is even in the sense all of the labels on the
associated weighted Dynkin diagram are even.  Let $\frl = \frl(\caO)$ denote the subalgebra of $\frg$ corresponding
to the roots labeled zero.  If $w_\frl$ denotes the long element of the Weyl group of $\frl$, $W(\frl) \subset
W$, then indeed $\caO$ is dense in $G\cdot (\frn \cap \frn^{w_\frl})$.  
As a consequence of Corollary
\ref{c:Qcan} below (applied to the diagonal symmetric subgroup
in $G \times G$), we have the following generalization.

\begin{theorem}
\label{t:intro}
Suppose $\caO$ is an even nilpotent 
adjoint orbit for $\frg$.  Let $W^\frl$ denote
the set of maximal length coset representatives of $W(\frl) \bs
W/W(\frl)$ where $\frl = \frl(\caO)$ corresponds to the nodes labeled
zero in the weighted Dynkin diagram for $\caO$.  Let $d(\caO)$ denote
the Spaltenstein dual of $\caO$ (e.g.~\cite[Appendix B]{bv} or
\cite[Section 6.3]{CM}).  Assume that both $\caO$ and $d(\caO)$ are
even, and fix an adjoint orbit $\caO'$ in $\SP(\caO)$.

\begin{enumerate}

\item[(a)]
There exists
a unique element $w(\caO')\in W^\frl$ such that $\caO'$ is dense in
$G\cdot(\frn \cap \frn^{w(\caO')})$.  (For example, if $\caO' = \caO$, then
$w(\caO') = w_\frl$, the longest element in the identity coset.)

\item[(b)]  Let $\pi'$ denote
the Springer representation associated to the trivial local system on $\caO'$, and
let $\sgn$ denote the sign representation of $W(\frl)$.  Then
\[
\dim \Hom_{W(\frl)} \left ( \sgn, \pi' \right ) = 1.
\]

\end{enumerate}
\end{theorem}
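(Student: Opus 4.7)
The plan is to recast the theorem in terms of the moment map $\mu_\caP \colon T^*(G/\caP) \to \caN$ for the standard parabolic $\caP = \caP(\caO)$ with Levi $\frl$, and then transcribe the conclusions through the Springer correspondence. Since $\caO$ is even, $\caO$ is the Richardson orbit of $\caP$ and $\mu_\caP$ surjects onto $\overline{\caO}$; the parenthetical assertion $w(\caO) = w_\frl$ in (a) is then immediate, since a direct check (using that $w_\frl \in W(\frl)$ preserves $\Delta^+ \setminus \Delta^+(\frl)$ setwise) gives $\frn \cap \frn^{w_\frl} = \frn_\caP$ and hence $G \cdot (\frn \cap \frn^{w_\frl}) = \overline{\caO}$.

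For part (a), the first step is a root-theoretic computation. For $w \in W^\frl$, the defining property of being the longest element of its double coset gives $w(\Delta^+(\frl)) \subset -\Delta^+$ and $w^{-1}(\Delta^+(\frl)) \subset -\Delta^+$, from which one deduces the identity
\begin{equation*}
\frn \cap \frn^w \;=\; \frn_\caP \cap \Ad(w)\, \frn_\caP.
\end{equation*}
Consequently $G \cdot (\frn \cap \frn^w)$ is the image under $\mu_\caP$ of the conormal bundle $T^*_{Q_w}(G/\caP \times G/\caP)$, where $Q_w$ is the $G$-orbit of relative position $w$. The irreducible components of the ``parabolic Steinberg variety'' $T^*(G/\caP) \times_\caN T^*(G/\caP)$ are exactly these conormal bundles, indexed by $W(\frl) \bs W / W(\frl)$. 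Under the hypothesis that both $\caO$ and $d(\caO)$ are even, the Lusztig--Spaltenstein theory of special pieces identifies canonically the components whose image lies in $\SP(\caO)$ with the orbits of $\SP(\caO)$ themselves, so each $\caO' \in \SP(\caO)$ is realized as the dense orbit of $G \cdot (\frn \cap \frn^w)$ for exactly one $w \in W^\frl$. This yields both existence and uniqueness in (a).

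For part (b), apply Borho--MacPherson to $\mu_\caP$: running the decomposition theorem through the Springer-type map $\mu_\caP$ and restricting to top cohomology over a Richardson element of $\caO$ yields
\begin{equation*}
\Ind_{W(\frl)}^{W} \sgn \;\cong\; \bigoplus_{\caO' \in \SP(\caO)} m_{\caO'}\, \pi_{\caO'},
\end{equation*}
where $\pi_{\caO'}$ is the Springer representation attached to the trivial local system on $\caO'$. Under the double-even hypothesis one has $m_{\caO'} = 1$ for every $\caO' \in \SP(\caO)$, which is the Springer-side mirror of the bijection from (a); this is precisely the structural content I expect Corollary~\ref{c:Qcan} to supply via the canonical identification $\caO' \mapsto Q_{w(\caO')}$. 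Frobenius reciprocity then gives
\begin{equation*}
\dim \Hom_{W(\frl)}(\sgn, \pi_{\caO'}) \;=\; \dim \Hom_{W}\bigl(\Ind_{W(\frl)}^{W} \sgn,\, \pi_{\caO'}\bigr) \;=\; m_{\caO'} \;=\; 1.
\end{equation*}

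The main obstacle is the multiplicity-freeness $m_{\caO'} = 1$ (equivalently, the bijectivity in (a)): in general, Lusztig's theory links $m_{\caO'}$ to the size of the conjugacy class of $\caO'$ in the Lusztig quotient $\overline{A}(\caO)$, and the double-even hypothesis is exactly what forces these classes to be singletons. Verifying this last point---presumably by leveraging Corollary~\ref{c:Qcan} rather than grinding case-by-case through classical and exceptional even orbits whose Spaltenstein dual is also even---is where the substantive work resides.
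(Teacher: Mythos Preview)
Your framework is correct --- the equivalence of (a) and (b) via the moment map for $G/\caP$ and Borho--MacPherson is exactly what the paper uses (this is its Proposition~\ref{p:geomcount} specialized to the diagonal symmetric subgroup $G \hookrightarrow G\times G$), and Frobenius reciprocity does reduce (b) to showing the multiplicity $m_{\caO'}$ of $\pi_{\caO'}$ in $\Ind_{W(\frl)}^W \sgn$ equals $1$. But you have not proved this multiplicity statement, and you say so yourself; that is the entire content of the theorem. Two further remarks: your displayed decomposition of $\Ind_{W(\frl)}^W\sgn$ is wrong as written, since the induced representation has many constituents beyond the $\pi_{\caO'}$ for $\caO'\in\SP(\caO)$ (Springer representations for nontrivial local systems, and for orbits below the special piece, also appear); and your plan to extract $m_{\caO'}=1$ from Corollary~\ref{c:Qcan} is circular, since in the paper that corollary is deduced \emph{from} the multiplicity-one statement (Proposition~\ref{p:geomcount2}), not the other way around.

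The paper's missing ingredient is a concrete route to $m_{\caO'}=1$ that you did not find. It runs through Kazhdan--Lusztig cell theory: Proposition~\ref{p:cellcomp2} (resting on Lusztig's description of cells and his paper on special pieces) shows that the set $\{\pi_{\caO'} : \caO'\in\SP(\caO)\}$ is exactly the set of irreducible constituents of the left-cell representation $V(\caO^\vee)$ attached to the long element $w(\caO^\vee)$ of $W(\frl)$; here the hypothesis that $d(\caO)$ is even is what makes Proposition~\ref{p:cellcomp2} apply (via the duality between Propositions~\ref{p:cellcomp1} and~\ref{p:cellcomp2}). Then \cite[Section~5]{bv} supplies the general fact that every irreducible constituent of this particular left cell occurs with multiplicity exactly one in $\Ind_{W(\frl)}^W\sgn$. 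Combining these two gives (b), and (a) follows from (b) by Proposition~\ref{p:geomcount}. Your speculation about conjugacy classes in the Lusztig quotient $\bar A(\caO)$ is in the right spirit --- the cell argument is ultimately organized by that quotient --- but the double-even hypothesis does not force $\bar A(\caO)$ to be trivial; rather, it is what allows the passage from Proposition~\ref{p:cellcomp1} to Proposition~\ref{p:cellcomp2} identifying the cell with Springer representations on the \emph{dual} special piece.
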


\noindent 
Under the conditions of the theorem, the map $\caO' \mapsto w(\caO')$
in part (a) is thus a natural section of the map $w \mapsto \caO(w)$.
(The equivalence of statements (a) and (b) goes back to
Borho-MacPherson.  A more general statement is given in Proposition
\ref{p:geomcount} below.)  It would be interesting to investigate how
to relax the evenness hypotheses in the theorem.

The paper is organized as follows. 
After recalling the machinery of \cite{abv} in Section \ref{s:unip},
we state our main result in Theorem \ref{t:main}.  We prove it in the final two sections. 
Examples \ref{ex:main},  \ref{ex:mainf4}, and \ref{ex:maine8} illustrate many of the main ideas.

\bigskip

\noindent{\bf Acknowledgements.}
We thank Jeffrey Adams for drawing our attention to the problem considered in this paper.  
In particular, using the software package {\tt atlas}
he computed a basis for the space of stable virtual
special unipotent representations in many exceptional examples;
see {\tt www.liegroups.org/tables/unipotent}.
These examples led us to the formulation of 
Theorem \ref{t:main}.

Finally, it is a pleasure to dedicate this paper to Gregg Zuckerman.
His revolutionary ideas, particularly the construction of
cohomological induction and his approach to the
character theory of real reductive groups
(and its relation with tensoring with finite-dimensional
representations), are the foundations on which the results in this paper
are built.

\section{Statement and Examples of the Main Results}
\label{s:unip}

Let $G$ be a connected reductive complex algebraic.
We begin by fixing
a weak extend group $G^\Gamma$ for $G$ as in \cite[Definition 2.13]{abv}.
This means that there is an exact sequence of real Lie groups
\[
1 \lra G  \lra G^\Gamma  \lra \Gamma := \mathrm{Gal}(\bbC/\bbR) \lra 1
\]
and each $\delta \in G^\Gamma - G$ acts by conjugation as an antiholomorphic
automorphism of $G$.   If $\delta \in G^\Gamma - G$ is such that
$\delta^2 \in Z(G)$ --- that is if $\delta$ is a strong real form 
for $G^\Gamma$ in the language of \cite{abv} --- then conjugation by $\delta$ defines an antiholomorphic
involution of $G$.  In this case, we write $G(\bbR, \delta)$ for the corresponding
fixed points, a real form of $G$.  It follows from \cite[Proposition 2.14]{abv}
that the set of real forms which arise in this way constitute exactly one
inner class of real forms, and moreover every such inner class arises in this way.
In particular, by fixing $G^\Gamma$ we have fixed
an inner class of real forms of $G$.

Recall (again from \cite[Definition 2.13]{abv}) that a representation of a strong real
form for $G^\Gamma$ is a pair ($\pi, \delta)$ where $\delta$ is a strong real form
of $G^\Gamma$ and $\pi$ is an admissible representation of $G(\bbR, \delta)$.  
Two representations
$(\pi,\delta)$ and $(\pi',\delta')$ are said to be equivalent if
there is an element $g \in G$ such that $\delta' = g\delta g^{-1}$ and
$\pi'$ is infinitesimal equivalent to  $\pi\circ Ad(g^{-1})$.  Write $\Pi(G/\bbR)$
for the set of infinitesimal equivalence classes of irreducible representations of strong
real forms for $G^\Gamma$.

\medskip
Fix a maximal ideal $I$ in the center of the enveloping algebra
$\U(\frg)$ of the Lie algebra $\frg$ of $G$.  Choose a Cartan
subalgebra $\frh \in \frg$ and write $W$ for the Weyl group of $\frh$
in $\frg$.  According to the Harish-Chandra isomorphism we may attach
an element $\lam \in \frh^*/W$ to $I$.  Let $\Pi^\lam(G/\bbR)$ denote the subset
of $\Pi(G/\bbR)$ consisting of those representations whose
associated Harish-Chandra modules are annihilated by $I$.  Write
$\bbZ\Pi^\lam(G/\bbR)$ for the (finite rank) free $\bbZ$ module with
basis indexed by $\Pi^\lam(G/\bbR)$.

\medskip
We next introduce various objects on the dual side.  Let $G^\vee$ denote the
Langlands dual group corresponding to $G$, and write $\frg^\vee$ for its Lie algebra.
Let $\wt G^\vee_\alg$ denote the algebraic universal cover of $G^\vee$ (e.g.~\cite[Definition 1.18]{abv}).  
For later use, recall that the construction of the dual group specifies a Cartan subalgebra $\frh^\vee$ of
$\frg^\vee$ which is canonically isomorphic to $\frh^*$.

Definition 1.8 and Lemma 1.9 of \cite{abv} introduce a smooth complex
algebraic variety $X = X(G^\Gamma)$ attached to the extended group
fixed above, and provide an action of $\wt G_\alg^\vee$
on $X$ which factors to an action of $G^\vee$.
(To be more precise, \cite[Definition 1.8]{abv}
explains how to define $X$ from an $L$-group, and the discussion
around \cite[Proposition 4.14]{abv} explains how to build an $L$-group
from a fixed inner class of real forms, in particular the class
specified by our fixed weak extended group $G^\Gamma$.)  The variety
$X$ is a disjoint union of smooth (possibly empty) finite-dimensional
varieties $X^\lam$ indexed by $\lam \in \frh^*/W$.  The action of
$G^\vee$ (and $\wt G^\vee_\alg$) on $X$ preserves each $X^\lam$.  The
orbits for both actions on $X^\lam$ are the same and are finite in
number.  We do not recall the general structure of $X^\lam$ here, but
instead describe certain special cases in detail below.

Let $\caP(X^\lam,\wt G_\alg^\vee)$ denote the category of $\wt G_\alg^\vee$-equivariant perverse sheaves on 
$X^\lam$, and write $\bbZ\caP(X^\lam,\wt G_\alg^\vee)$ for its integral Grothendieck group.
Let $T^*_{G^\vee}(X^\lam)$ denote the conormal variety for the action of $G^\vee$ on 
$X^\lam$, namely the subvariety of $T^*(X^\lam)$ consisting of the unions of the 
various conormal bundles $T_Q^*(X^\lam)$ to $G^\vee$ orbits $Q$ on $X^\lam$.  
(Recall that the orbits of $G^\vee$ and
$\wt G^\vee_\alg$ are the same.)  The characteristic cycle functor
gives a map
\[
\CC \; : \; \bbZ\caP(X^\lam,\wt G_\alg^\vee) \lra 
H_\top\left (T^*_{G^\vee}(X^\lam), \bbZ\right ) \simeq \bigoplus_{Q \in G^\vee \bs X^\lam}
\bbZ\left [ \ol{T_Q^*(X^\lam)} \right ].
\]
The right-hand side is the top-dimensional integral Borel-Moore homology group 
of $T^*_{G^\vee}(X^\lam)$ which, as indicated, is isomorphic to the direct sum of the $\bbZ$ span
of the fundamental
classes of closures of the individual conormal bundles.

The ABV interpretation of the Local Langlands Conjecture, summarized in \cite[Corollary 1.26]{abv},
provides a $\bbZ$-module isomorphism
\[
\Phi \; : \; \bbZ\Pi^\lam(G/\bbR) \simeq \left (\bbZ\caP(X^\lam,\wt G_\alg^\vee)\right )^\star
\]
for each $\lam \in \frh^*/W$; here and elsewhere 
$(\; \cdot\; )^\star$ applied to a $\bbZ$-module
denotes $\Hom_\bbZ(\; \cdot\; ,\bbZ)$.
The isomorphism $\Phi$ depends on more data than just the weak extended group $G^\Gamma$ fixed above.
It requires fixing a (strong) extended group $(G^\Gamma, \caW)$ as in
\cite[Definition 1.12]{abv} and a strong real form \cite[Definition 1.13]{abv}.
We define
\begin{equation}
\label{e:stable}
\bbZ_\st\Pi^\lam(G/\bbR) 
:= \Phi^{-1} \left ( \bbZ\caP(X^\lam,\wt G_\alg^\vee) \bigr /
  \ker(\CC) \right)^\star. 
\end{equation}
This is a space of integral linear combinations of irreducible representations 
of $G^\Gamma$, that is virtual representations.
(This space depends only on the weak extended group $G^\Gamma$.)

For the purpose of this paper, we may take \eqref{e:stable} 
as the definition of the
subspace of stable virtual characters in $\bbZ\Pi^\lam(G/\bbR)$.
The equivalence with Langlands' original formulation of stability 
is given in \cite[Chapter 18]{abv}.

The main aim of this paper is to define a canonical basis of
$\bbZ_\st\Pi^\lam(G/\bbR)$  (in certain special cases) indexed
by rational forms of special pieces of the nilpotent cone of $\frg^\vee$.
We now specify the
special cases of interest.  Begin by fixing a nilpotent adjoint orbit
$\caO^\vee$ for $\frg^\vee$.  Choose a Jacobson-Morozov triple $\{e^\vee, f^\vee, h^\vee\}$
with $h^\vee \in \frh^\vee$ ($\frh^\vee$ as defined above).  Set
\begin{equation}
\label{e:unipic}
\lam(\caO^\vee) = \frac12 h^\vee \in \frh^\vee \simeq \frh^*.
\end{equation}
Define
\begin{equation}
\label{e:unipl}
\frl^\vee(\caO^\vee) = \text{ the centralizer in $\frg^\vee$ of $\lam(\caO^\vee)$};
\end{equation}
equivalently $\frl^\vee(\caO^\vee)$ is the sum of the zero eigenspaces of $\ad(h^\vee)$.
Set
\begin{equation}
\label{e:unipp}
\frp^\vee(\caO^\vee) = \text{ the sum of the non-negative eigenspaces of $\ad(h^\vee)$.}
\end{equation}
Let $I(\caO^\vee)$ denote the maximal ideal in the center of $\U(\frg)$ 
corresponding to $\lam(\caO^\vee)$ under the Harish-Chandra isomorphism.
According to a result of Dixmier \cite{dix}, 
there is a unique maximal primitive ideal
$J(\caO^\vee)$ in $\U(\frg)$ containing $I(\caO^\vee)$.  We say a representation
$(\delta, \pi)$ of $G^\Gamma$ is {\em special unipotent attached to $\caO^\vee$}
if the Harish-Chandra module of $\pi$ is annihilated by $J(\caO^\vee)$.
We write
\[
\Pi(\caO^\vee) \subset \Pi^{\lam(\caO^\vee)}(G/\bbR)
\]
for the subset of irreducible special unipotent representations of $G^\Gamma$
attached to $\caO^\vee$, write $\bbZ\Pi(\caO^\vee)$ for their span, and define
\[
\bbZ_\st\Pi(\caO^\vee) := \bbZ\Pi(\caO^\vee) \cap \bbZ_\st\Pi^\lam(G/\bbR).
\]
It is this space for which we will find a canonical basis under certain natural 
hypotheses.

To state our main results, we need more detailed information about the
structure of the $G^\vee$ action on $X^\lam$ assuming $\lam$ is integral.  
Let 
\begin{equation}
\label{e:unipY}
Y^\vee = \text{the variety of parabolic subalgebras of $\frg^\vee$ conjugate
to $\frp^\vee(\caO^\vee)$}
\end{equation}
with notation as in \eqref{e:unipp}.
Proposition 6.16 of \cite{abv}
provides a collection of symmetric subgroups $K^\vee_1, \dots, K^\vee_k$ of $G^\vee$.
Each $K_i^\vee$ acts on $Y^\vee$ with finitely many orbits.  Furthermore,
\cite[Proposition 7.14]{abv} implies the existence of an isomorphism
\begin{equation}
\label{e:Ki}
\caP(X^\lam, G^\vee) \; \simeq \; \caP(Y^\vee, K_1^\vee) \oplus \cdots \oplus \caP(Y^\vee, K_k^\vee),
\end{equation}
where $\caP(Y^\vee, K_i^\vee)$ once again denotes the category of
$K_i^\vee$ equivariant perverse sheaves on $Y^\vee$.  Moreover, if we
let $\CC_i$ denote the characteristic cycle functor for $\caP(Y^\vee,
K_i^\vee)$, then the isomorphism in \eqref{e:Ki} descends to an
isomorphism
\begin{equation}
\label{e:ccisom1}
\caP(X^\lam, G^\vee)\bigr / \ker(\CC)
 \; \simeq \; \caP(Y^\vee, K_1^\vee)  \bigr / \ker(\CC_1)\; \oplus \cdots \oplus \; \caP(Y^\vee, K_k^\vee) \bigr / \ker(\CC_k).
\end{equation}
General properties of the characteristic cycle construction imply that it is insensitive to central
extensions of the group acting in the sense that
\[
\caP(X^\lam, \wt G_\alg^\vee)\bigr / \ker(\CC) \simeq 
\caP(X^\lam, G^\vee)\bigr / \ker(\CC);
\]
see \cite[Proposition 2.6.2]{chang}, for example.
Thus \eqref{e:ccisom1} in fact gives
\begin{equation}
\label{e:ccisom2}
\caP(X^\lam, \wt G_\alg^\vee)\bigr / \ker(\CC)
 \simeq \caP(Y^\vee, K_1^\vee) \bigr / \ker(\CC_1) \oplus \cdots \oplus \caP(Y^\vee, K_k^\vee) \bigr / \ker(\CC_k).
\end{equation}
As a matter of notation, we let $\frk^\vee_i$ denote the Lie algebra of $K_i^\vee$ and write
\begin{equation}
\label{e:cartani}
\frg^\vee = \frk^\vee_i \oplus \frs^\vee_i
\end{equation}
for the corresponding Cartan decomposition.
According to \cite{kr}, if $\caO^\vee$ is any nilpotent adjoint orbit in $\frg^\vee$, then 
each $K_i^\vee$ acts with finitely many orbits on $\caO^\vee \cap \frs^\vee_i$,
\[
\#\left ( K_i^\vee \backslash (\caO^\vee \cap \frs^\vee_i) \right ) < \infty.
\]

Recall that an orbit $\caO^\vee$ for $\frg^\vee$ is said to be even, 
if the 
eigenvalues of $\ad(h^\vee)$ acting on $\frg^\vee$ are all even
integers; equivalently if $\lam(\caO^\vee)$ is integral.  Assume this is the case and fix an
orbit
\[
\caO^\vee_K  \; \in \; \bigcup_{i = 1}^k K_i^\vee \backslash (\caO^\vee \cap \frs^\vee_i).
\]
Chapter 27 of \cite{abv} defines an Arthur packet parametrized by $\caO^\vee_K$,
\[
\mathrm{A}(\caO^\vee_K) \subset \Pi(\caO^\vee).
\]
The union of the various Arthur packets (over all possible orbits $\caO^\vee_K$)
exhausts $\Pi(\caO^\vee)$ (but the union is not in general disjoint).  Moreover,
for each $\caO^\vee_K$, 
the discussion around \cite[(1.34c)]{abv} defines a stable integral linear combination of elements of 
$\Pi(\caO^\vee_K)$,
\begin{equation}
\label{e:abvstab}
\pi(\caO^\vee_K) \in \bbZ_\st\Pi(\caO^\vee).
\end{equation}
These virtual representations are all linearly independent, so in particular one has
\begin{equation}
\label{e:premaineq}
\dim_\bbZ \bbZ_\st\Pi(\caO^\vee) \geq \sum_i \#\left ( K_i^\vee \backslash (\caO^\vee \cap \frs^\vee_i) \right ).
\end{equation}
Our main result
finds other interesting stable representations attached to $K^\vee$ orbits
on the special piece parametrized by $\caO^\vee$ and (in favorable instances) proves they
are a basis of $\bbZ_\st(\caO^\vee)$.

\begin{theorem}
\label{t:main}
Let $G$ be a connected reductive algebraic group with dual group $G^\vee$.  Fix a weak extended group
$G^\Gamma$ for $G$ (in particular, an inner class of real forms for $G$).
Fix an even nilpotent adjoint orbit $\caO^\vee$ for $\frg^\vee$.
Assume further that the Spaltenstein dual $\caO:= d(\caO^\vee)$, a nilpotent adjoint orbit for $\frg$
(cf.~\cite[Appendix B]{bv}), is also even.

Recall the Cartan decompositions of \eqref{e:cartani}
and the corresponding symmetric subgroups $K_i^\vee$ introduced above.  Write $\SP(\caO^\vee)$ for
the special piece of the nilpotent cone of $\frg^\vee $ containing  $\caO^\vee$.
Then 
\begin{equation}
\label{e:maineq}
\dim_\bbZ \bbZ_\st\Pi(\caO^\vee) = \sum_i \#\left ( K_i^\vee \backslash (\SP(\caO^\vee) \cap \frs^\vee_i) \right );
\end{equation}
cf.~\eqref{e:premaineq}.
In fact, for each element $\caO^\vee_K$ on the right-hand side, 
equation \eqref{e:extradef} below defines an element
$\pi(\caO^\vee_K) \in  \bbZ_\st\Pi(\caO^\vee)$ so that
\[
\left \{ \pi(\caO^\vee_K) \; \bigr | \; \caO^\vee_K \in \bigcup_i K^\vee_i \bs (\SP(\caO^\vee) \cap \frs^\vee_i)\right \}
\]
is a basis of $\bbZ_\st\Pi(\caO^\vee)$.  (When $G^\vee \cdot
\caO^\vee_K = \caO^\vee$, $\pi(\caO_K^\vee)$ coincides with the stable virtual representation  in
\eqref{e:abvstab}.)
\end{theorem}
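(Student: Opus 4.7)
The plan is to use the ABV isomorphism $\Phi$ to transport the question to the dual perverse-sheaf side and then exploit the decomposition \eqref{e:ccisom2}. Explicitly, $\bbZ_\st\Pi(\caO^\vee)$ corresponds under $\Phi$ to the image of the special-unipotent simples of $\caP(X^{\lam(\caO^\vee)},\wt G^\vee_\alg)$ inside $(\caP(X^{\lam(\caO^\vee)},\wt G^\vee_\alg)/\ker\CC)^\star$. By \eqref{e:ccisom2} this decomposes as a direct sum over the symmetric subgroups $K_i^\vee$, so it suffices to compute, for each $i$, the rank of $\CC_i$ on the special-unipotent part of $\caP(Y^\vee,K_i^\vee)$ and to show that this rank equals $\#\bigl(K_i^\vee\bs(\SP(\caO^\vee)\cap\frs_i^\vee)\bigr)$.

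The geometric bridge is the moment map $\mu: T^*Y^\vee\to\frg^\vee$. Because $\caO^\vee$ is even, it is the Richardson orbit induced from $\frp^\vee(\caO^\vee)$, so $\mu$ is a surjective, generically finite map onto $\ol{\caO^\vee}$. Kostant-Rallis symmetry guarantees that for each $K_i^\vee$-orbit $Q\subset Y^\vee$ the image $\mu(T_Q^*(Y^\vee))$ lies in $\frs_i^\vee\cap\ol{\caO^\vee}$ and is the closure of a unique $K_i^\vee$-orbit, which I denote $\caO^\vee_K(Q)$. A simple object of $\caP(Y^\vee,K_i^\vee)$ lies in the special-unipotent piece attached to $\caO^\vee$ precisely when its characteristic cycle is supported on conormal bundles $T_Q^*(Y^\vee)$ with $\caO^\vee_K(Q)\subset\SP(\caO^\vee)$. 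Hence the image of $\CC_i$ on this piece is spanned by the classes $[\ol{T_Q^*(Y^\vee)}]$ for such $Q$.

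The heart of the proof is then to show that $Q\mapsto\caO^\vee_K(Q)$ induces a bijection
\[
\bigl\{K_i^\vee\text{-orbits } Q\subset Y^\vee : \caO^\vee_K(Q)\subset\SP(\caO^\vee)\bigr\} \;\longleftrightarrow\; K_i^\vee\bs(\SP(\caO^\vee)\cap\frs_i^\vee).
\]
Surjectivity follows from generic finiteness of $\mu$. Injectivity is the crux, and it is where the hypothesis that $\caO:=d(\caO^\vee)$ is also even enters: the $G^\vee$-equivariant statement is the Borho-MacPherson multiplicity formula (essentially Theorem \ref{t:intro}(b), restated as Proposition \ref{p:geomcount}), which gives, for each $\caO'\in\SP(\caO^\vee)$, a unique $G^\vee$-conjugacy class of conormal cells in $T^*Y^\vee$ projecting densely onto $\ol{\caO'}$, since $\dim\Hom_{W(\frl^\vee)}(\sgn,\pi')=1$ under the evenness hypotheses. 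The required $K_i^\vee$-refinement says that this class decomposes into pieces labelled by the $K_i^\vee$-orbits on $\caO'\cap\frs_i^\vee$; I plan to extract it from a microlocal comparison between the $W(\frl^\vee)$-isotypic components of Springer-fibre cohomology at $x\in\caO'$ and the components of the $K_i^\vee$-orbit stratification of $\mu^{-1}(x)$, using evenness-driven multiplicity-freeness of the relevant Springer representations over $W(\frl^\vee)$. This is the main obstacle.

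Granted the bijection, for each $\caO^\vee_K$ appearing on the right-hand side of \eqref{e:maineq} with preimage $Q=Q(\caO^\vee_K)$, define $\pi(\caO^\vee_K):=\Phi^{-1}(\eta_Q)$, where $\eta_Q$ sends a perverse sheaf to the coefficient of $[\ol{T_Q^*(Y^\vee)}]$ in its characteristic cycle. Stability is automatic since $\eta_Q$ factors through $\caP/\ker\CC$; the class lies in $\bbZ\Pi(\caO^\vee)$ because $\mu(\ol{T_Q^*(Y^\vee)})\subset\ol{\SP(\caO^\vee)}\subset\ol{\caO^\vee}$; and linear independence follows from the linear independence of the cycle classes $[\ol{T_Q^*(Y^\vee)}]$ in top-dimensional Borel-Moore homology. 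Finally, when $\caO^\vee_K\subset\caO^\vee$ itself, comparison with the Arthur-packet construction of \cite[Chapter 27]{abv} recovers the stable sum \eqref{e:abvstab}, completing the proof.
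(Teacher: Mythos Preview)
Your overall geometric framework---working on the dual side via \eqref{e:ccisom2}, using the moment map $\mu$, and defining $\pi(\caO^\vee_K)$ as the linear functional picking off the coefficient of $[\ol{T^*_{Q}(Y^\vee)}]$---matches the paper's Section~4. But there is a genuine gap, and it is exactly the step you flag as ``the main obstacle.''

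You propose to prove the bijection $Q\mapsto\caO^\vee_K(Q)$ directly, establishing $K_i^\vee$-level injectivity by a ``microlocal comparison between the $W(\frl^\vee)$-isotypic components of Springer-fibre cohomology \dots\ and the components of the $K_i^\vee$-orbit stratification of $\mu^{-1}(x)$.'' This is not carried out, and the paper does \emph{not} prove it this way. The Borho--MacPherson/Proposition~\ref{p:geomcount} statement gives $c(\caO^\vee_K)=\dim\Hom_{W(\frl^\vee)}(\sgn,\Sp(x^\vee)^{A_{K^\vee}(x^\vee)})$, and under the evenness hypotheses one gets $\dim\Hom_{W(\frl^\vee)}(\sgn,\Sp(x^\vee)^{A_{G^\vee}(x^\vee)})=1$. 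But passing from $A_{G^\vee}$-invariants to $A_{K^\vee}$-invariants only yields $c(\caO^\vee_K)\geq 1$, not equality. A direct argument that this is $=1$ would amount to controlling which $A_{G^\vee}(x^\vee)$-isotypic pieces of $\Sp(x^\vee)$ contain the $\sgn$ of $W(\frl^\vee)$, and there is no evident mechanism for this from the evenness hypotheses alone.

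The paper's key idea, which you are missing, is to prove the dimension equality \eqref{e:maineq} \emph{first and independently}, by translating to regular integral infinitesimal character and using the coherent continuation $W$-action. Concretely: Proposition~\ref{p:sunipcount} expresses $\dim_\bbZ\bbZ_\st\Pi(\caO^\vee)$ as $\dim\Hom_W(V(\caO^\vee)\otimes\sgn,\bbZ_\st\Pi^\nu(G/\bbR))$ for the left-cell module $V(\caO^\vee)$; \eqref{e:regstableisom} identifies $\bbZ_\st\Pi^\nu(G/\bbR)$ with $\bigoplus_i\H_\top(T^*_{K_i^\vee}(X^\vee),\bbZ)^\star$; Proposition~\ref{p:cellcomp2} (which uses that $d(\caO^\vee)$ is even) decomposes $V(\caO^\vee)$ as the sum of Springer representations on trivial local systems over $\SP(\caO^\vee)$; and Rossmann's Proposition~\ref{p:rossmann} computes the $W$-module structure of the conormal homology. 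Combining these gives \eqref{e:maineq} directly. Only \emph{after} this count is in hand does the paper obtain the bijection: the inequality $c(\caO^\vee_K)\geq 1$ from Proposition~\ref{p:geomcount2} gives \eqref{e:orbitineq}, and comparison with the already-established \eqref{e:maineq} forces equality throughout, whence $c(\caO^\vee_K)=1$ (Corollary~\ref{c:Qcan}).

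A secondary issue: your assertion that ``the image of $\CC_i$ on this piece is spanned by the classes $[\ol{T_Q^*(Y^\vee)}]$ for such $Q$'' presupposes both containments. One containment (that each such $\pi(Q)$ lies in $\bbZ_\st\Pi(\caO^\vee)$) follows as you indicate, using that $\AV_\bbC$ of an irreducible is the closure of a special orbit. But the reverse containment---that these $\pi(Q)$ span---is again a dimension statement, and in the paper it too comes from \eqref{e:maineq} rather than from a direct description of characteristic cycles of special-unipotent simples.
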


\begin{example}
\label{ex:main}
Suppose $G = \Sp(4,\C)$ and $G^\Gamma$ gives rise to the inner class of $G$
containing the split form.   
There are four equivalence classes of strong real forms 
for $G^\Gamma$, $\{\delta_s, \delta_{2,0}, \delta_{1,1}, \delta_{0,2}\}$.  The labeling is arranged so that
$G(\bbR, \delta_s) = \Sp(4,\bbR)$ and $G(\bbR, \delta_{p,q}) = \Sp(p,q)$.  

Let $\caO^\vee$ denote the (even) nilpotent orbit for $G^\vee =
\SO(5,\C)$ whose Jordan type is given by the partition $311$.  
Then $d(\caO^\vee)$ is the orbit for $G$ with Jordan type corresponding to the partition 
$22$, which is also even, so Theorem \ref{t:main}
applies.

In \cite[Example 27.14]{abv}, the elements of $\Pi(\caO^\vee)$ are enumerated. 
Among them are eight representations of $\Sp(4,\R)$ and one of $\Sp(1,1)$.  
The representations of $\Sp(4,\R)$ are the three irreducible constituents of 
$\Ind_{GL(2,\bbR)}^{\Sp(4,\R)}(\det)$; the three irreducible constituents of
 $\Ind_{GL(2,\bbR)}^{\Sp(4,\R)}(|\det|)$; and the two irreducible constituents of
 $\Ind_{GL(1,\bbR) \times \Sp(2, \bbR)}^{\Sp(4,\bbR)}(\sgn(\det) \otimes 1)$.
 These eight representations are distinguished by their lowest $\U(2)$ types
 which in the respective cases are $(2,0), (0,2)$, and $(0,0)$; $(1,1), (-1,-1)$, and
 $(1,-1)$; and $(1,0)$ and $(0,-1)$.  Write $\pi_s(m,n)$ for the corresponding 
 special unipotent representation of $\Sp(4,\R)$ with lowest
 $\U(2)$ type $(m,n)$.  Meanwhile the unique special unipotent representation
 of $\Sp(1,1)$ attached to $\caO^\vee$ is the irreducible spherical representation
 with infinitesimal character $\lam(\caO^\vee)$ which we denote by $\pi_{(1,1)}(0)$.

The symmetric
subgroups $K_i^\vee$  above in this case are
\[
K^\vee_i = \mathrm{S}(\mathrm{O}(5-i) \times \mathrm{O}(i)) \text{ for $i=0,1,2$}.
\]
In terms of the signed tableau parametrization (for example, \cite[Chapter 9]{CM}), 
$\bigcup_i K^\vee_i\bs(\caO^\vee \cap \frs^\vee)$ consists of three elements
\[
\begin{matrix}
  +&-&+\\ +\\ +
\end{matrix}
\qquad , \quad
\begin{matrix}
  -&+&-\\ +\\ +
\end{matrix}
\qquad , \quad
\begin{matrix}
  +& -&+\\ +\\ -
\end{matrix}
\qquad ;
\]
the first arises for $i=1$, the second and third for $i=2$.  This means there are three Arthur packets
in $\Pi(\caO^\vee)$.  They are listed in \cite[(27.17)]{abv}.  They give rise, respectively, to 
the following three stable virtual representations
in $\bbZ_\st(\caO^\vee)$,
\begin{align*}
&\pi_s(1,0)+\pi_s(0,-), \\
&\pi_s(0,0)+\pi_s(1,-1), \\
&\pi_s(1,1)+\pi_s(-1,-1) +\pi_s(2,2)+\pi_s(-2,-2)+\pi_{(1,1)}(0).
\end{align*}
Meanwhile there is another orbit $\caO^{\vee'}$ 
(besides $\caO^\vee$) in $\SP(\caO^\vee)$, namely the orbit with Jordan type corresponding to
the partition $221$.  
This time $\bigcup_i K^\vee_i\bs(\caO^{\vee'} \cap \frs^\vee)$ consists of  one element
\begin{equation}
\label{e:extra}
\begin{matrix}
  +& - \\-& +\\ +
\end{matrix}\end{equation}
arising for $K^\vee_2$.
Theorem \ref{t:main} thus implies $\dim_\bbZ\bbZ_\st(\caO^\vee) = 3+1$, and gives
an additional
stable virtual representation parametrized by the orbit in \eqref{e:extra}.
This extra stable sum is
\begin{equation}
  \label{eq:exsp4nsp}
\pi_s(1,1)+\pi_s(-1,-1)-\pi_s(2,2)-\pi_s(-2,-2).
\end{equation}
\end{example}

\begin{example}
\label{ex:mainf4}
Let $G$ be of type F4, and let $\caO^\vee$ be the orbit labeled
$F4(a3)$ in the Bala-Carter classification (e.g.~\cite[Section 8.4]{CM}).  
If we orient the Dynkin diagram of F4 so that the first two roots are long,
the weighted Dynkin diagram for $\caO^\vee$ is $0200$.  In particular, the orbit
is even.  In fact $\caO^\vee$ is equal to its own Spaltenstein
dual, and thus Theorem \ref{t:main} applies.

The special piece $\SP(\caO^\vee)$ consists of four other orbits
besides $F4(a3)$.  In
the Bala-Carter classification, they are labeled $C3(a1), \wt{A2} + A1,
B2$, and $A2 + \wt{A1}$.  The respective weighted Dynkin diagrams are
$1010,0101,2001,$ and $0010$.

There is a unique inner
class of real forms for $G$; it contains the split, rank one, and
compact forms.  
(In fact it is easy to see (from the singularity of the infinitesimal character
$\lam(\caO^\vee)$ that $\Pi(\caO^\vee)$ can consist
of representations only of the split form.)
The only symmetric subgroup $K^\vee$ appearing above
in this case is  the quotient of $\Sp(6,\C) \times SL(2,\C)$ by
the diagonal copy of a central $\bbZ/2$.  From the tables in 
\cite[Section 9.5]{CM}, we deduce that
\[
\# K^\vee \bs (F4(a3) \cap \frs^\vee) = 3
\]
and so there are three Arthur packets in $\Pi(\caO^\vee)$.  Meanwhile
we have
\begin{align*}
&\# K^\vee \bs (C3(a1) \cap \frs^\vee) = 2\\
&\# K^\vee \bs ((\wt{A2} + A1) \cap \frs^\vee) = 1\\
&\# K^\vee \bs (B2 \cap \frs^\vee) = 2\\
&\# K^\vee \bs ((A2 + \wt{A1}) \cap \frs^\vee) = 1.\\
\end{align*}
Thus Theorem \ref{t:main} says
\[
\dim_\bbZ \bbZ_\st\Pi(\caO^\vee) = 3 + 2 + 1 + 2 + 1.
\]
The definition in \eqref{e:extradef} gives a canonical basis
for the space.  To write the basis down explicitly requires
computing characteristic cycles of irreducible objects
in $\caP(K^\vee, Y^\vee)$.  We have not performed the 
calculations required to do this.
\end{example}

\begin{example}
\label{ex:maine8}
Let $G$ be of type E8, and let $\caO^\vee$ denote the
orbit labeled $E8(a7)$ in the Bala-Carter classification.
It is even and self-dual, and thus Theorem \ref{t:main}
applies.  The special piece it parametrizes consists of the
additional orbits $E7(a5), E6(a3) + A1, D6(a2), D5(a1)+A2,
A5+A1,$ and $A4+A3$.

There is a unique inner
class of real forms for $G$, and (arguing as in the
previous example), $\Pi(\caO^\vee)$ can consist
of representations only of the split form.
The only symmetric subgroup $K^\vee$ appearing above
in this case is a quotient of $\mathrm{Spin}(16,\C)$ by
a central $\bbZ/2$ (but $K^\vee$ is not isomorphic to $\mathrm{SO}(16,\C)$).  Again using
the tables in 
\cite[Section 9.5]{CM}, we deduce that
\[
\# K^\vee \bs (E8(a7) \cap \frs^\vee) = 3
\]
and so there are three Arthur packets in $\Pi(\caO^\vee)$.  Meanwhile
we have
\begin{align*}
&\# K^\vee \bs (E7(a5) \cap\frs^\vee) = 2 \\
&\# K^\vee \bs ((E6(a3)+A1)\cap\frs^\vee) = 2 \\
&\# K^\vee \bs (D6(a2)\cap\frs^\vee) = 2 \\
&\# K^\vee \bs ((A5 + A1)\cap\frs^\vee) = 1 \\
&\# K^\vee \bs  ((D5(a1) + A2)\cap\frs^\vee) = 1 \\
&\# K^\vee \bs ((A4+A3)\cap\frs^\vee) = 1.
\end{align*}
Thus Theorem \ref{t:main} implies
\[
\dim_\bbZ \bbZ_\st\Pi(\caO^\vee) = 3 + 2 + 2 + 2 + 1 + 1 +1.
\]
\end{example}

\section{Proof of Equality in \eqref{e:maineq}}
\label{s:proof}

Our main technique allows us to compute the numbers in \eqref{e:maineq} in terms
of certain Weyl group calculations.  The full Weyl group does not act at singular infinitesimal character,
and so we must instead translate to regular infinitesimal character and work there.

Retain the setting of Theorem \ref{t:main}.
Temporarily choose a system of simple roots for $\frh$ in $\frg$ and a representative
$\lam_\circ$ of $\lam(\caO^\vee)$ which is dominant.  Let $\mu$ be the highest weight
of a finite-dimensional representation of $G$ 
such that $\nu_\circ:= \lam_\circ + \mu \in \frh^*$ is dominant and regular.  Let
$\nu$ denote the image of $\nu_\circ$ in $\frh^*/W$.  As above, we can consider
the set $\Pi^\nu(G/\bbR)$ and its $\bbZ$ span $\bbZ\Pi^\nu(G/ \bbR)$.  This space identifies with an
appropriate Grothendieck group of representations at regular integral infinitesimal character which admits a coherent continuation action of $W$.  

Recall the symmetric subgroups $K^\vee_1, \dots, K^\vee_k$ of the previous section.  Let $X^\vee$ denote the full flag variety for
$\frg^\vee$.  There is an action of $W$ on each Grothendieck group $\bbZ\caP(X^\vee,K^\vee_i)$. 
(One way to see this is to use the Riemann-Hilbert correspondence to identify $\caP(X^\vee, K^\vee_i)$
with a category of $K^\vee_i$ equivariant holonomic $\caD$ modules on $X^\vee$.  In turn, by localization,
this category is a equivalent to a category of $\frg^\vee$ modules which admits a coherent continuation action
of $W$.)
 
Meanwhile Corollary 1.26 and Proposition 7.14 of \cite{abv} give an isomorphism (depending on
the extended group $(G^\Gamma, \caW)$)
\[
\Psi \; : \; \bbZ\Pi^\nu(G/\bbR) \lra \bigoplus_{i} \bbZ\caP(X^\vee,K^\vee_i)^\star
\]
which intertwines the $W$ action on both sides.
Once again we have characteristic cycle functors
\[
\CC_i \; : \; \bbZ\caP(X^\vee,K^\vee_i) \lra \H_\top\left( T_{K^\vee_i}^*(X^\vee), \bbZ\right) 
\simeq \bigoplus_{Q \in K^\vee_i \bs X^\vee} \left [\ol{T^*_{K^\vee_i}(X^\vee)}\right ].
\]
We have remarked that the domain of $\CC_i$ carries an action of $W$.  The range does as
well, and according to results of Tanisaki \cite{tan}, each $\CC_i$ is $W$-equivariant.  Thus
\[
\bbZ\caP(X^\vee,K^\vee_i)\bigr / \ker(\CC_i) \simeq \H_\top\left( T_{K^\vee_i}^*(X^\vee), \bbZ\right) 
\]
as representations of $W$.  Once again we define
\[
\bbZ_\st\Pi^\nu(G/ \bbR) := \Psi^{-1}\left ( \bigoplus_i \bbZ\caP(X^\vee,K^\vee_i)\bigr / \ker(\CC_i) \right )^\star.
\]
and we have an isomorphism
\begin{equation}
\label{e:regstableisom}
\bbZ_\st\Pi^\nu(G/ \bbR) \simeq \bigoplus_i \H_\top\left( T_{K^\vee_i}^*(X^\vee), \bbZ\right)^\star
\end{equation}
of representations of $W$.

For our counting argument, we need to specify a particular left cell representation.
Let $\frl^\vee(\caO^\vee)$ denote the centralizer in $\frg^\vee$ of
$\lam(\caO^\vee) \in \frh^\vee$, and let
$w(\caO^\vee)$ denote the long element of the Weyl group of
$\frl^\vee(\caO^\vee)$ viewed as an element of $W(\frh^\vee, \frg^\vee) = W$.  
Let
$V(\caO^\vee)$ denote the representation of $W$ afforded by the
integral linear combinations of elements of the Kazhdan-Lusztig
left cell containing
$w(\caO^\vee)$. 

\begin{prop}[{\cite[Section 5]{bv}}]
\label{p:sunipcount}
Retain the setting above.  In particular, assume
$\caO^\vee$ is even.
We have
\[
\dim_\bbZ\bbZ\Pi(\caO^\vee)
= \dim \Hom_W(V(\caO^\vee) \otimes \sgn, \bbZ\Pi^\nu(G/ \bbR)).
\]
and
\[
\dim_\bbZ\bbZ_\st\Pi(\caO^\vee)
= \dim \Hom_W(V(\caO^\vee) \otimes \sgn, \bbZ\Pi_\st^\nu(G/ \bbR)).
\]
\end{prop}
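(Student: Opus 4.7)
The plan is to follow the strategy of \cite[Section 5]{bv}: translate from singular infinitesimal character $\lam(\caO^\vee)$ up to regular $\nu$ where $W$ acts on Grothendieck groups via coherent continuation, then cut out $\bbZ\Pi(\caO^\vee)$ inside $\bbZ\Pi^\nu(G/\bbR)$ as a cell multiplicity space using Joseph's parametrization of primitive ideals.

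First, I would apply Jantzen-Zuckerman translation. Since $\caO^\vee$ is even, $\lam(\caO^\vee)$ is integral, and its stabilizer $W_\lam := W(\frl^\vee(\caO^\vee)) \subset W$ is a parabolic subgroup generated by simple reflections for an appropriate choice of positive system. The translation principle identifies $\bbZ\Pi^{\lam(\caO^\vee)}(G/\bbR)$ with the subspace of $\bbZ\Pi^\nu(G/\bbR)$ on which $W_\lam$ acts by the sign character (the orientation of the sign is fixed by the conventions in \cite{abv}), giving
\[
\dim_\bbZ \bbZ\Pi^{\lam(\caO^\vee)}(G/\bbR) = \dim \Hom_{W_\lam}(\sgn, \bbZ\Pi^\nu(G/\bbR)).
\]
Next, I would further restrict to annihilator containing $J(\caO^\vee)$. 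By Joseph's theorem, primitive ideals in $\U(\frg)$ with regular integral infinitesimal character are parametrized by Duflo involutions in $W$, and $J(\caO^\vee)$ is the one containing $I(\caO^\vee)$ corresponding to $w(\caO^\vee)$; the left cell of $w(\caO^\vee)$ is precisely the one whose integral span carries the $W$-module $V(\caO^\vee)$. An irreducible of $\bbZ\Pi^\nu(G/\bbR)$ translates into $\bbZ\Pi(\caO^\vee)$ exactly when it lies in the $V(\caO^\vee)$-isotypic part and its coherent family has the correct sign behavior under $W_\lam$. A Frobenius reciprocity calculation combining the two conditions then yields
\[
\dim_\bbZ \bbZ\Pi(\caO^\vee) = \dim \Hom_W(V(\caO^\vee) \otimes \sgn, \bbZ\Pi^\nu(G/\bbR));
\]
the $\sgn$ twist on $V(\caO^\vee)$ records the Vogan-duality convention matching the Harish-Chandra side with the geometric (perverse sheaf) side of \cite{abv}.

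Finally, for the stable version, I would run the same argument on the stable quotient. Both ingredients extend: the isomorphism $\Psi$ in \eqref{e:regstableisom} is $W$-equivariant, and by Tanisaki's theorem each $\CC_i$ is $W$-equivariant, so $\ker(\CC)$ is a $W$-submodule and $\bbZ_\st\Pi^\nu(G/\bbR)$ inherits a $W$-module structure. Translation functors preserve stability, since under $\Psi$ they correspond to geometric translation (pull-push along maps of dual flag varieties), which descends to the $\ker \CC$ quotient. Replacing $\bbZ\Pi^\nu(G/\bbR)$ by $\bbZ_\st\Pi^\nu(G/\bbR)$ throughout the previous two paragraphs then gives the second formula. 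The main obstacle is not in any of the structural steps but rather in the careful sign bookkeeping required to arrive at $V(\caO^\vee) \otimes \sgn$ in place of some other twist of the cell module; this bookkeeping is carried out in \cite[Section 5]{bv}.
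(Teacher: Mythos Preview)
The paper does not supply a proof of this proposition; it is stated with attribution to \cite[Section 5]{bv} and then used as input to the counting arguments that follow. Your sketch correctly identifies the ingredients of that reference---translation from the singular infinitesimal character $\lam(\caO^\vee)$ to regular $\nu$, the cell-theoretic description of the maximal primitive ideal $J(\caO^\vee)$ via $w(\caO^\vee)$, and (for the stable version) the $W$-equivariance of $\Psi$ and of the characteristic cycle maps from Tanisaki---so there is nothing further in the paper to compare against.

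One small point of phrasing: saying an irreducible ``lies in the $V(\caO^\vee)$-isotypic part'' is imprecise, since the left cell module $V(\caO^\vee)$ is not irreducible in general. The actual mechanism in \cite{bv} is that every irreducible constituent of a fixed left cell occurs with the \emph{same} multiplicity in the coherent continuation representation, so counting special unipotent representations amounts to computing $\dim\Hom_W$ against the full cell module (tensored with $\sgn$), which is the formula you wrote down.
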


\medskip

The following result brings the role of special pieces into play.  To state it,
we need to introduce some notation for the Springer correspondence.
Fix any nilpotent adjoint orbit $\caO$ for $\frg$ and a representative $x$ of
$\caO$.  Let $A_G(\xi)$ denote the component group of the centralizer
of $x$ in $G$.  We let $\Sp(x)$ denote the Springer representation
of $W \times A_G(x)$ on the top homology of the Springer fiber over
$x$ (normalized so that $\Sp(x)$ is the sign representation of $W$
if $x$ is zero).  As usual, we set
\[
\Sp(x)^{A_G(x)} = \Hom_{A_G(x)}\left(\triv, \Sp(\xi)\right).
\]
This is a 
a representation of $W$.  

\begin{prop}
\label{p:cellcomp1}
Suppose $\caO^\vee$ is a an even nilpotent adjoint orbit for $\frg^\vee$.
Let $\caO$ denote special nilpotent orbit for $\frg$ obtained as the Spaltenstein dual
of $\caO^\vee$.   Enumerate representative for the adjoint orbits in the special piece parametrized 
by $\caO$ as $x_1, \cdots, x_l$.  Then
\[
V(\caO^\vee) \otimes \sgn \simeq \bigoplus_{i}\Sp(x_i)^{A_G(x_i)}.
\]
\end{prop}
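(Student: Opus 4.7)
The plan is to compute both sides as explicit $W$-modules by combining the Barbasch--Vogan description of the left cell $V(\caO^\vee)$ with Spaltenstein--Lusztig duality for Weyl group representations, and then matching the result against Lusztig's combinatorial description of families and special pieces via the Springer correspondence.

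I would first exploit that $w(\caO^\vee) = w_{\frl^\vee(\caO^\vee)}$ is the long element of the Weyl group of a Levi subalgebra and that $\caO^\vee$ is even, so that $\caO^\vee$ is the Richardson orbit induced from zero in $\frl^\vee(\caO^\vee)$. Together with truncated induction, this gives (in the Springer normalization of the paper) that $\Sp(x^\vee)^{A_{G^\vee}(x^\vee)} \simeq j^W_{W(\frl^\vee)}(\sgn_{\frl^\vee})$ occurs with multiplicity one in the character of $V(\caO^\vee)$, and that the two-sided cell of $w(\caO^\vee)$ corresponds under Lusztig's bijection to the special orbit $\caO^\vee$ (which is automatically special since $\caO = d(\caO^\vee)$ is, and $\caO^\vee$ itself is pinned down as a Richardson orbit of an even Levi). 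Consequently, $V(\caO^\vee)$ lies in the Lusztig family of $\caO^\vee$.

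The next step is to invoke the main technical ingredient, Lusztig's explicit identification of the particular left cell through $w_{\frl^\vee}$. In Lusztig's parametrization of left cells inside a given two-sided cell by conjugacy classes in the canonical quotient $\ol A(\caO^\vee)$, this cell corresponds to the identity element, and its character is a sum of Springer representations indexed by pairs $(\caO^{\vee'},\chi)$ with $\caO^{\vee'} \in \SP(\caO^\vee)$ and $\chi$ ranging over appropriate local systems. I would then apply $(-)\otimes\sgn$, which under the Springer correspondence realizes Spaltenstein--Lusztig duality on $W$-representations (cf.~Appendix~B of \cite{bv}), to transport this sum to the $\frg$ side. The key combinatorial point is that under this transport, the resulting Springer representations are precisely $\Sp(x_i)^{A_G(x_i)}$ as $\caO_i = G\cdot x_i$ ranges over $\SP(\caO)$, via the Achar--Sommers-type bijection pairing orbits in $\SP(\caO^\vee)$ (with their canonical local systems) against orbits in $\SP(\caO)$.

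The main obstacle is the explicit execution of this bijection. One must verify that the distinguished left cell $V(\caO^\vee)$ in Lusztig's family for $\caO^\vee$, after sign twist, produces exactly one Springer representation with trivial local system for each orbit in $\SP(\caO)$ and no others, with multiplicity precisely one in every case. For classical types this reduces to a manipulation of partitions and signed tableaux and can be carried out by hand, matching the enumeration in Example~\ref{ex:main}. For exceptional types it requires Lusztig's Fourier transform matrices for the relevant families, and checking that the multiplicities coincide with unity is a delicate combinatorial verification. Once this is in place, the isomorphism $V(\caO^\vee)\otimes\sgn \simeq \bigoplus_i \Sp(x_i)^{A_G(x_i)}$ follows immediately.
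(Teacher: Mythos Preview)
Your approach is correct in its broad strokes and would eventually yield the result, but you are working much harder than necessary. The paper's proof is a single sentence: the proposition follows by combining \cite[Proposition~5.28]{bv} with \cite[Theorem~0.4]{lu}. The first reference already identifies $V(\caO^\vee)\otimes\sgn$ as a sum of Springer representations $\Sp(x_i)^{A_G(x_i)}$ attached to trivial local systems on a specific set of orbits; the second then identifies that set of orbits with the special piece $\SP(\caO)$.

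By contrast, you propose to reconstruct this from Lusztig's parametrization of left cells inside a two-sided cell by conjugacy classes in the canonical quotient, transport across the sign twist via Spaltenstein--Lusztig duality, and then verify the resulting multiplicities type by type---partition combinatorics for classical groups, Fourier transform matrices for exceptional ones. This amounts to re-deriving the content of the two cited results rather than invoking them. In particular, the case-by-case check you flag as the ``main obstacle'' is exactly what \cite[Theorem~0.4]{lu} packages uniformly, so there is no need to open the Fourier matrices at all. Your route would work, but it trades a two-line citation for a substantial detour through material already available in the literature in the precise form needed here.
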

\begin{proof}
This follows by combining \cite[Proposition 5.28]{bv} and \cite[Theorem 0.4]{lu}.
\end{proof}

The proposition involves special pieces on the group side, while the statement
of Theorem \ref{t:main} involves special pieces on the dual side.   If we make the
additional hypothesis that $\caO$ is even, then we can match up the two sides.

\begin{prop}
\label{p:cellcomp2}
Suppose $\caO^\vee$ is a an even nilpotent adjoint orbit for $\frg^\vee$.
Let $\caO$ denote its Spaltenstein dual, and further assume that $\caO$ is even.
Enumerate representative in the special piece parametrized 
by $\caO^\vee$ as $x^\vee_1, \cdots, x^\vee_l$.  Then
\[
V(\caO^\vee) \simeq \bigoplus_{i} \Sp(x^\vee_i)^{A_{G^\vee}(x^\vee_i)}.
\]
That is, $V(\caO^\vee)$ is the sum over the orbits in $\SP(\caO^\vee)$
of the Weyl group representations attached to the trivial local system on them.
\end{prop}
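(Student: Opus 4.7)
The approach is to derive Proposition \ref{p:cellcomp2} from Proposition \ref{p:cellcomp1} by applying the latter twice — once as stated, and once with the roles of $\frg$ and $\frg^\vee$ interchanged. The extra hypothesis that $\caO$ is also even is precisely what legitimizes the second application.

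First, Proposition \ref{p:cellcomp1} gives directly
\[
V(\caO^\vee) \otimes \sgn \;\simeq\; \bigoplus_{i} \Sp(x_i)^{A_G(x_i)}, \qquad (*)
\]
where $x_1, \dots, x_l$ are representatives of the $G$-orbits in $\SP(\caO)$. Next, by hypothesis $\caO$ is even in $\frg$; since even orbits are Richardson, hence special, its Spaltenstein dual is the even (and special) orbit $\caO^\vee$ in $\frg^\vee$. The proof of Proposition \ref{p:cellcomp1} — resting on \cite[Proposition 5.28]{bv} and \cite[Theorem 0.4]{lu}, both of which are formulated symmetrically in the two Langlands dual groups — therefore applies with $(\frg,\caO)$ and $(\frg^\vee,\caO^\vee)$ interchanged, yielding
\[
V(\caO) \otimes \sgn \;\simeq\; \bigoplus_{j} \Sp(x^\vee_j)^{A_{G^\vee}(x^\vee_j)}, \qquad (**)
\]
where $x^\vee_1, \dots, x^\vee_l$ are representatives of the $G^\vee$-orbits in $\SP(\caO^\vee)$.

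Tensoring $(*)$ with $\sgn$ and comparing with $(**)$, the desired conclusion of Proposition \ref{p:cellcomp2} reduces to the identity of $W$-characters
\[
\bigoplus_i \Sp(x_i)^{A_G(x_i)} \otimes \sgn \;\simeq\; \bigoplus_j \Sp(x^\vee_j)^{A_{G^\vee}(x^\vee_j)}. \qquad (\dagger)
\]
Equivalently, one may rephrase $(\dagger)$ as the assertion that $V(\caO^\vee) \simeq V(\caO) \otimes \sgn$ as $W$-representations, a statement that concerns only left cells and their sign twists and which is compatible with Joseph's identification of the two-sided cell of $V(\caO^\vee)$ with the one attached to the special orbit $\caO$ (hence, after sign twist, to $\caO^\vee$).

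The main obstacle is Step 3, i.e., verifying $(\dagger)$. This is a purely Springer-theoretic fact about mutually Spaltenstein-dual special pieces: the sum on each side of $(\dagger)$ depends only on the common special orbit $\caO$ (equivalently $\caO^\vee$) together with its canonical Lusztig quotient $\bar A(\caO) \simeq \bar A(\caO^\vee)$, whose conjugacy classes parametrize the orbits in each special piece in bijection. To establish it I would extract from \cite{lu} the description of each side as a representation of $W$ determined by $\bar A(\caO)$, and verify that the two parametrizations differ precisely by the sign twist dictated by Spaltenstein duality. The duality statement most often cited from \cite{lu} concerns only the special orbit itself (namely $\Sp(\caO,\triv) \otimes \sgn \simeq \Sp(\caO^\vee,\triv)$), so some additional work, possibly case-by-case using the classification of canonical quotients (which are elementary abelian $2$-groups or small symmetric groups), may be required to obtain the full piece-wise statement $(\dagger)$.
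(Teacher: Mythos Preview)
Your reduction is sound and tracks the paper's own route: start from Proposition~\ref{p:cellcomp1} and then pass from the $\sgn$-twisted statement on the $\frg$ side to the untwisted statement on the $\frg^\vee$ side. The paper's proof is a single sentence citing exactly these two ingredients --- Proposition~\ref{p:cellcomp1} together with Lusztig's classification of cells \cite{lu:crg}, the relevant part of which is recalled in \cite[Theorem~4.7d]{bv}.

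The gap in your proposal is that you present the identity $(\dagger)$ --- equivalently $V(\caO^\vee) \simeq V(\caO) \otimes \sgn$ as $W$-representations --- as an unresolved obstacle that may require case-by-case verification via canonical quotients. It is not: this is precisely the content of Lusztig's classification that the paper invokes. Tensoring with $\sgn$ permutes left cells, carrying the two-sided cell attached to a special representation to the two-sided cell attached to its Spaltenstein dual, and the decomposition of the specific left cell $V(\caO^\vee)$ into irreducibles (hence into the Springer representations on the right-hand side of $(\dagger)$) is exactly what \cite{lu:crg} and \cite[Theorem~4.7d]{bv} provide. So rather than proposing new work, you should simply cite these references at that step. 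Once you do, your second application of Proposition~\ref{p:cellcomp1} becomes optional: Lusztig's result already identifies the constituents of $V(\caO^\vee)$ directly with the Springer representations over $\SP(\caO^\vee)$, without the detour through $V(\caO)$.
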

\begin{proof}
This follows from the previous proposition and Lusztig's classification of cells \cite{lu:crg} (the relevant
details of which are recalled in \cite[Theorem 4.7d]{bv}).
\end{proof}

\bigskip

Proposition \ref{p:sunipcount} gives a way to compute the dimension of the left-hand side
of \eqref{e:maineq} in terms of Weyl group representations.  We need a way to do the
same for the right-hand side.

\begin{prop}
\label{p:rossmann}
Suppose $K^\vee$ is a symmetric subgroup of $G^\vee$ and write
$\frg^\vee = \frk^\vee \oplus \frs^\vee$ for the corresponding Cartan decomposition.
Let $x^\vee_1, x_2^\vee, \dots, $ denote representatives of the nilpotent $K^\vee$ orbits on 
$\frs^\vee$.   Let $A_{K^\vee}(x^\vee_i)$ denote the component group of the
centralizer in $K^\vee$ of $x^\vee_i$.  (Since this group maps to $A_{G^\vee}(x^\vee_i)$, it
makes sense to consider invariants $\Sp(x^\vee_i)^{A_{K^\vee}(x_i^\vee)}$ in $\Sp(x^\vee_i)$
of the image of $A_{K^\vee}(x^\vee_i)$ in $A_{G^\vee}(x^\vee_i)$.)
As $W$ representations, we have
\[
 \H_\top(T_{K^\vee}^*(X^\vee), \bbZ)
 \simeq \sum_{i} \Sp(x^\vee_i)^{A_{K^\vee}(x^\vee_i)}.
 \]
 In particular, each representation of $W$ attached to the trivial local system on a complex
 nilpotent orbit appears with multiplicity equal to the number of $K^\vee$ orbits on its
 intersection with $\frs^\vee$.
\end{prop}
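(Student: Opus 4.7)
The plan is to analyze the moment map
\[
\mu \; : \; T^*_{K^\vee}(X^\vee) \lra \caN \cap \frs^\vee
\]
and combine the structure of its fibers with Springer theory. Recall that $T^*_{K^\vee}(X^\vee)$ is a union of conormal bundles $T^*_Q(X^\vee)$ indexed by the finitely many $K^\vee$-orbits $Q$ on $X^\vee$, so
\[
\H_\top\bigl(T^*_{K^\vee}(X^\vee),\bbZ\bigr) \simeq \bigoplus_{Q \in K^\vee \bs X^\vee} \bbZ\bigl[\,\ol{T^*_Q(X^\vee)}\,\bigr].
\]
The first step is to observe that for each $Q$ the image $\mu(\ol{T^*_Q(X^\vee)})$ is the closure of a single nilpotent $K^\vee$-orbit in $\frs^\vee$ and that $\mu$ is generically finite onto this closure. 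This stratifies $K^\vee \bs X^\vee$ by the nilpotent $K^\vee$-orbits on $\frs^\vee$.

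The second step is to identify, for a fixed representative $x^\vee_i$, the set of orbits $Q$ mapping to $K^\vee \cdot x^\vee_i$ with the set of $A_{K^\vee}(x^\vee_i)$-orbits on the irreducible components of the Springer fiber $\caB_{x^\vee_i} = \mu_{X^\vee}^{-1}(x^\vee_i)$, where $\mu_{X^\vee} : T^*(X^\vee) \to \caN$ is the full Springer resolution. Concretely, each irreducible component $C \subset \caB_{x^\vee_i}$ lies in a unique closure $\ol{T^*_Q(X^\vee)}$, and two components yield the same $Q$ precisely when they are related by the stabilizer $Z_{K^\vee}(x^\vee_i)$, hence by $A_{K^\vee}(x^\vee_i)$. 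This gives an isomorphism of abelian groups
\[
\H_\top\bigl(T^*_{K^\vee}(X^\vee),\bbZ\bigr) \simeq \bigoplus_i \bigl(\bbZ[\Irr(\caB_{x^\vee_i})]\bigr)^{A_{K^\vee}(x^\vee_i)}
= \bigoplus_i \Sp(x^\vee_i)^{A_{K^\vee}(x^\vee_i)},
\]
where in the last equality I use that the permutation module on $\Irr(\caB_{x^\vee_i})$ is, as an $A_{G^\vee}(x^\vee_i)$-module, the Springer representation $\Sp(x^\vee_i)$.

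The third and most delicate step is to check that this identification is $W$-equivariant when the left-hand side is equipped with Tanisaki's coherent continuation action and the right-hand side with the Springer action. This is essentially Rossmann's theorem on characteristic cycles: the transition between the ``Lagrangian basis'' of fundamental classes of conormal bundles and the ``Springer basis'' of irreducible components is implemented by an upper-triangular unipotent matrix with respect to the closure order, and this matrix intertwines the two $W$-actions. I would invoke this comparison (in the form given, for instance, by Chriss--Ginzburg or Tanisaki) to promote the bijection of bases to an isomorphism of $W$-modules.

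The main obstacle is this final step, since one must verify that the Springer $W$-action, which is classically constructed using the equivariance for a Borel subgroup, pulls back correctly under the symmetric subgroup $K^\vee$. Granting this, the ``in particular'' clause follows formally: for a fixed complex orbit $G^\vee \cdot x^\vee$, summing $\Sp(x^\vee_i)^{A_{K^\vee}(x^\vee_i)}$ over representatives $x^\vee_i$ of the $K^\vee$-orbits in $G^\vee \cdot x^\vee \cap \frs^\vee$ produces $\Sp(x^\vee)^{A_{G^\vee}(x^\vee)}$ (the trivial local system summand) with multiplicity equal to $\#K^\vee \bs (G^\vee \cdot x^\vee \cap \frs^\vee)$, after accounting for the nontrivial local system summands of $\Sp(x^\vee)$ through their $A_{K^\vee}(x^\vee_i)$-invariants.
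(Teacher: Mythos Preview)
Your outline is correct and is essentially an unpacking of the argument the paper cites: the paper's proof consists of a direct appeal to Rossmann's theorem \cite[Theorem 3.3]{ro}, which gives precisely the decomposition of $\H_\top(T^*_{K^\vee}(X^\vee),\bbZ)$ into Springer pieces that you derive in your first two steps, together with the $W$-equivariance of your third step.  The one point the paper adds, and which answers the concern you flag as the ``main obstacle,'' is that Rossmann works with the conormal variety for orbits of a \emph{real form} of $G^\vee$ on $X^\vee$ rather than for a symmetric subgroup; the translation to the $K^\vee$-equivariant setting is supplied by the Matsuki correspondence for sheaves \cite{muv}, which identifies the two conormal geometries and carries the $W$-action across.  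Invoking that identification closes the gap you were worried about.
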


\noindent{\bf Proof.}  
This follows from \cite[Theorem 3.3]{ro}.  (Rossmann works with the conormal variety of
orbits of real forms of $G^\vee$ on $X^\vee$.  To translate to the conormal variety of orbits for a symmetric
subgroup, one can use \cite{muv}, for example.)
\qed

\medskip

\noindent{\bf Proof of equality in \eqref{e:maineq}.}
{From} Proposition \ref{p:sunipcount} and \eqref{e:regstableisom}, we have
\begin{align*}
\dim_\bbZ \bbZ_\st\Pi(\caO^\vee) &= \dim \Hom_W\left (V(\caO^\vee) \otimes \sgn, \bbZ_\st\Pi^\nu(G/\bbR) \right ) \\
&= \dim \Hom_W\left (V(\caO^\vee) \otimes \sgn\; , \; \bigoplus_i \H_\top(T_{K_i^\vee}^*\caB, \bbZ)^* \right ) \\
&= \dim \Hom_W\left (V(\caO^\vee) \; , \; \bigoplus_i \H_\top(T_{K_i^\vee}^*\caB, \bbZ) \right ).
\end{align*}
The concluding sentences of  Propositions \ref{p:cellcomp2} and \ref{p:rossmann} show that the right-hand side 
equals
\[
\sum_i \#\left ( K_i^\vee \backslash (\SP(\caO^\vee) \cap \frs^\vee_i) \right )
\]
as claimed.
\qed

\section{Proof of Theorem \ref{t:main}}
In this section we prove the last assertion of Theorem \ref{t:main}.
According to \eqref{e:stable},
\begin{equation}
\label{e:first}
\bbZ_\st\Pi(\caO^\vee) \subset 
\bbZ_\st\Pi^{\lam(\caO^\vee)}(G/\bbR)
\simeq
\bigoplus_i \left [ \H_\top(T^*_{K^\vee_i}(Y^\vee),\bbZ) \right ]^\star.
\end{equation}
Our main task is to determine which linear functionals on
\begin{equation}
\label{e:fundclass}
\H_\top(T^*_{K^\vee_i}(Y^\vee),\bbZ) = \bigoplus_{Q \in K_i^\vee \bs Y^\vee} \left [ \ol{T_Q^*(Y^\vee)} \right ]
\end{equation}
correspond to elements of $\bbZ_\st\Pi(\caO^\vee)$ in \eqref{e:first}.   This is the content of part (2) of  the next proposition.
To formulate it, we recall the $G^\vee$ equivariant moment map $\mu$ from $T^*(Y^\vee)$ to $(\frg^\vee)^*$.
We use an invariant form to identify $\frg^\vee$ and $(\frg^\vee)^*$, and view the image of the moment
map in $\frg^\vee$ itself.

\begin{prop}
\label{p:functionals}
Retain the setting of Theorem \ref{t:main}.  For each orbit $Q$ of some $K_i^\vee$ on $Y^\vee$, define
\[
m_Q \in \left [ \H_\top(T^*_{K^\vee_i}(Y^\vee),\bbZ) \right ]^\star
\]
as the multiplicity of the fundamental class 
  corresponding to the closure of the conormal bundle to $Q$
(c.f.~\eqref{e:fundclass}). Recall the isomorphism
\begin{equation}
\label{e:stablefunctionals}
\bbZ_\st\Pi^{\lam(\caO^\vee)}(G/\bbR)
\simeq
\bigoplus_i \left [ \H_\top(T^*_{K^\vee_i}(Y^\vee),\bbZ) \right ]^\star
\end{equation}
and write $\pi(Q)$ 
for the element of $\bbZ_\st\Pi^{\lam(\caO^\vee)}(G/\bbR)$ corresponding to $m_Q$.
\begin{enumerate}
\item[(1)]
The set 
\begin{equation}
\label{e:stablefunctionals1}
\left \{ \pi(Q) \; | \; Q \in K^\vee_i \bs Y^\vee \text{ for some $i$} \right \}
\end{equation}
 is a basis
for 
\[
\bbZ_\st\Pi^{\lam(\caO^\vee)}(G/\bbR).
\]

\item[(2)]
The set
\begin{equation}
\label{e:stablefunctionals2}
 \left \{ \pi(Q) \; \bigr | \; \mu \left ( T^*_Q(Y^\vee) \right) \cap \SP(\caO^\vee)
\textrm{ is nonempty} \right \}
\end{equation}
is a basis
for the subspace
\[
\bbZ_\st\Pi(\caO^\vee) 
\subset \bbZ_\st\Pi^{\lam(\caO^\vee)}(G/\bbR).
\]
\end{enumerate}
\end{prop}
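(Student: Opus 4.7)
Part (1) is formal: by \eqref{e:fundclass}, $\H_\top(T^*_{K^\vee_i}(Y^\vee),\bbZ)$ is free abelian on the fundamental classes $[\ol{T^*_Q(Y^\vee)}]$ indexed by $Q \in K^\vee_i \bs Y^\vee$, and the $m_Q$ are by construction the dual basis in $[\H_\top(T^*_{K^\vee_i}(Y^\vee),\bbZ)]^\star$. Pulling back along \eqref{e:stablefunctionals} (summed over $i$) shows that $\{\pi(Q)\}$ is a $\bbZ$-basis of $\bbZ_\st\Pi^{\lam(\caO^\vee)}(G/\bbR)$.

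For part (2), the plan is to identify $\bbZ\Pi(\caO^\vee) \subset \bbZ\Pi^{\lam(\caO^\vee)}(G/\bbR)$ geometrically via the moment map, and then count. Since $\caO^\vee$ is even, $\frp^\vee(\caO^\vee)$ is a Jacobson-Morozov parabolic for $\caO^\vee$, so $\caO^\vee$ is its Richardson orbit and $\mu(T^*(Y^\vee)) = \ol{\caO^\vee}$; consequently each $\mu(T^*_Q(Y^\vee))$ is the closure of a single $K^\vee_i$-orbit $\mu_*(Q)$ inside $\caN^\vee \cap \frs^\vee_i$. The condition in \eqref{e:stablefunctionals2} amounts to $\mu_*(Q) \subseteq \SP(\caO^\vee)$, i.e., the $G^\vee$-saturation of $\mu_*(Q)$ lies in the special piece. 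The key fact is that every orbit in $\SP(\caO^\vee)$ dualizes under Spaltenstein to the same orbit $\caO = d(\caO^\vee)$ on the group side, matching the associated variety of the primitive ideal $J(\caO^\vee)$; the evenness of $\caO$ enters here to align the two sides cleanly. Combining this with the ABV description of $\Pi(\caO^\vee)$ as the representations whose perverse sheaves on $Y^\vee$ have characteristic cycles meeting $\SP(\caO^\vee)$ under $\mu$, together with the triangularity of CC with respect to the closure order on conormal bundles, I would show that $\pi(Q) \in \bbZ_\st\Pi(\caO^\vee)$ precisely when \eqref{e:stablefunctionals2} holds.

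To finish, one counts. Under the evenness hypotheses, the set in \eqref{e:stablefunctionals2} has cardinality $\sum_i \# (K^\vee_i \bs (\SP(\caO^\vee) \cap \frs^\vee_i))$ --- a Springer-type equality reflecting the bijection between those $Q$ with $\mu(T^*_Q(Y^\vee)) \cap \SP(\caO^\vee) \neq \emptyset$ and the $K^\vee_i$-orbits on $\SP(\caO^\vee) \cap \frs^\vee_i$ given by $Q \mapsto \mu_*(Q)$. By \eqref{e:maineq} (proved in Section \ref{s:proof}), this matches $\dim_\bbZ \bbZ_\st\Pi(\caO^\vee)$; combined with the linear independence supplied by part (1), the set is a basis. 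The hardest step will be the geometric characterization in the second paragraph: bridging the primitive-ideal condition on the group side with the moment-map condition on the dual side. The double evenness hypothesis is exactly what makes this work, ensuring both the Richardson property for $\frp^\vee(\caO^\vee)$ and the clean collapse of $\SP(\caO^\vee)$ onto the single group-side orbit $\caO$ under Spaltenstein duality.
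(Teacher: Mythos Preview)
Your overall strategy matches the paper's: part (1) is the dual-basis observation, and for part (2) you prove the elements in \eqref{e:stablefunctionals2} lie in $\bbZ_\st\Pi(\caO^\vee)$, then count using \eqref{e:maineq} and the linear independence from part (1). The paper does exactly this.

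There is, however, a genuine gap in your counting step. You assert that the set in \eqref{e:stablefunctionals2} has cardinality $\sum_i \#\bigl(K_i^\vee \backslash (\SP(\caO^\vee) \cap \frs_i^\vee)\bigr)$ because $Q \mapsto \mu_*(Q)$ is a bijection onto the $K_i^\vee$-orbits on $\SP(\caO^\vee)\cap\frs_i^\vee$. But this bijection is not an input; it is what the paper deduces \emph{after} the proof (their Corollary~\ref{c:Qcan}). What one can prove directly is only the inequality
\[
\#\{Q : \mu(T_Q^*(Y^\vee)) \cap \SP(\caO^\vee) \neq \emptyset\} \;\geq\; \sum_i \#\bigl(K_i^\vee \backslash (\SP(\caO^\vee)\cap\frs_i^\vee)\bigr),
\]
i.e.\ surjectivity of $Q \mapsto \mu_*(Q)$. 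The paper establishes this via a Springer-theoretic formula (Proposition~\ref{p:geomcount}): the number $c(\caO_K^\vee)$ of orbits $Q$ with $\mu(T_Q^*(Y^\vee))$ densely meeting $\caO_K^\vee$ equals $\dim\Hom_{W(\frl^\vee(\caO^\vee))}(\sgn, \Sp(x^\vee)^{A_{K^\vee}(x^\vee)})$, and then (Proposition~\ref{p:geomcount2}) shows this is $\geq 1$ when $G^\vee\cdot\caO_K^\vee \subset \SP(\caO^\vee)$, using the structure of the left cell $V(\caO^\vee)$ and the evenness of $d(\caO^\vee)$. Your phrase ``Springer-type equality'' points in the right direction but skips this work; without it the bijection is unjustified and the argument is circular.

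A second, smaller omission: for the inclusion $\pi(Q) \in \bbZ_\st\Pi(\caO^\vee)$, the paper's mechanism is that for irreducible $\caS$, the associated variety $\AV_\bbC(\caS)$ is the closure of a single \emph{special} orbit (Borho--Brylinski irreducibility plus Barbasch--Vogan speciality). Since $\caO^\vee$ is the only special orbit whose closure meets $\SP(\caO^\vee)$, any irreducible $\caS$ supported on $\ol Q$ with $\mu(\ol{T_Q^*(Y^\vee)})$ meeting $\SP(\caO^\vee)$ must have $\AV_\bbC(\caS) = \ol{\caO^\vee}$; then \cite[Theorem~27.12]{abv} gives $\pi(Q) \in \bbZ_\st\Pi(\caO^\vee)$. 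Your ``triangularity of CC'' is relevant (it gives $m_Q(\CC(\caS)) = 1$ for the DGM extension on $Q$), but the speciality of $\AV_\bbC$ is the point you are missing.
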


\begin{proof}
Since
\[
\bigcup_i \{ m_Q \; | \; Q \in K_i^\vee \bs Y^\vee \}
\]
is obviously a basis for the left-hand side of \eqref{e:stablefunctionals}
(in light of \eqref{e:fundclass}), statement (1)
of the proposition is clear.

For the second statement, we begin by proving
\[
 \left \{ \pi(Q) \; \bigr | \; \mu \left ( T^*_Q(Y^\vee) \right) \cap \SP(\caO^\vee) \text{ is nonempty}\right \}
 \]
are linearly independent elements of
$\bbZ_\st\Pi(\caO^\vee)$.
We need some additional notation.
For an object $\caS$ in $\caP(Y^\vee,K_i^\vee)$, write
\[
\CC_i(\caS) = a_1[\ol{T_{Q_1}^*(Y^\vee)}] + \cdots +a_r[\ol{T_{Q_r}^*(Y^\vee)}],
\]
and define
\[
\AV_\bbC(\caS) = G^\vee \cdot \bigcup_i 
\mu\left (\ol {T_{Q_i}^*(Y^\vee)} \right ) \; \subset \; \ol{\caO^\vee}.
\]
According to the irreducibility theorem of \cite{bb:i}, 
if $\caS$ is irreducible, then $\AV_\bbC(\caS)$ is the closure of
a single adjoint orbit.  The results of \cite{bv:i,bv:ii} show that
the orbit must be special.  

Next recall the isomorphism
\[
\Phi \; : \; 
\bbZ_\st\Pi^{\lam(\caO^\vee)}(G/\bbR)
\lra 
\bigoplus_i \left ( \bbZ\caP(Y^\vee, K^\vee_i)/\ker(\CC_i) \right ) ^\star
\]
obtained from \eqref{e:stable} and \eqref{e:ccisom2}.
It follows from \cite[Theorem 27.12]{abv} that 
$\pi \in \bbZ_\st\Pi^{\lam(\caO^\vee)}(G/\bbR)$ belongs to the subspace
$\bbZ_\st\Pi(\caO^\vee)$ if and only if there is an irreducible object
$\caS$ in some $\caP(Y^\vee, K^\vee_i)$ 
with $\AV_\bbC(\caS) = \ol{\caO^\vee}$ such that $\Phi(\pi)(\caS) \neq 0$.
As a consequence, suppose 
$Q$ is an orbit of $K_i^\vee$ on $Y^\vee$ such that
\[
\mu\left ( \ol {T_Q^*(Y^\vee)} \right ) \cap \SP(\caO^\vee) \neq \emptyset.
\]
Set
\[
m_Q' = m_Q \circ \CC_i
\in \left (\bbZ\caP(Y^\vee, K^\vee_i)/\ker(\CC_i)\right )^\star.
\]
There exists at least one irreducible object $\caS$ in $\caP(Y^\vee, K^\vee_i)$
whose support is the closure of $Q$.  (For example, let $\caS$  be the DGM
extension of the trivial local system on $Q$.  The
fundamental class of the closure of $T_Q^*(Y^\vee)$ appears with multiplicity
one in $\CC_i(\caS)$.)
Since $\mu \left ( \ol {T_Q^*(Y^\vee)} \right ) \cap \SP(\caO^\vee)$ is nonempty
by hypothesis, and
since $\AV_\bbC(\caS)$ must be the closure of a special orbit, it follows
that $\AV_\bbC(\caS)= \ol{\caO^\vee}$.  Therefore, by the discussion above,
the element $\pi(Q) \in \bbZ_\st\Pi^{\lam(\caO^\vee)}(G/\bbR)$ corresponding
to $m_Q$ is  nonzero and belongs to $\bbZ_\st\Pi(\caO^\vee)$.  In other
words,
\[
 \left \{ \pi(Q) \; \bigr | \; \mu \left ( T^*_Q(Y^\vee) \right) \cap \SP(\caO^\vee)
\textrm{ is nonempty} \right \}
\; \subset \; \bbZ_\st\Pi(\caO^\vee).
\]
Because the $m_Q$ are clearly linearly independent, so are the elements
$\pi(Q)$ on the left-hand side above.

It remains to show the elements in \eqref{e:stablefunctionals2} are indeed a basis of
$\bbZ_\st\Pi(\caO^\vee)$.  Because of the linear independence just established, it suffices to check
\begin{equation}
\label{e:ineq}
\sum_i \# \left \{ Q \in K_i^\vee \bs Y^\vee \; \bigr | \; \mu \left ( T^*_Q(Y^\vee) \right) \cap \SP(\caO^\vee)
\right \} \geq \dim_\bbZ\bbZ_\st\Pi(\caO^\vee).
\end{equation}
The following general result  will be the main tool we use for counting the left-hand side.
\begin{prop}
\label{p:geomcount}
As above, assume $\caO^\vee$ is even (but do not necessarily assume the Spaltenstein 
dual of $\caO^\vee$ is even).
With notation as in \eqref{e:unipic}--\eqref{e:unipY}, fix a symmetric subgroup
$K^\vee$ of $G^\vee$ and write $\frg^\vee = \frk^\vee \oplus \frs^\vee$ for
the corresponding Cartan decomposition.
Fix a nilpotent $K^\vee$ orbit $\caO^\vee_{K}$ on $\frs^\vee$.  
Let $c(\caO^\vee_{K})$ denote the number of $K^\vee$ orbits 
$Q$ on $Y^\vee$
such that $\mu(T^*_{Q}Y^\vee)$ meets $\caO^\vee_{K}$ 
in a dense open set.  Then,
for $x^\vee \in \caO^\vee_{K}$,
\[
c(\caO^\vee_{K}) = \dim\Hom_{W(\frl^\vee(\caO^\vee))}\left( \sgn, \Sp(x)^{A_{K^\vee}(x^\vee)}\right)
\]
with notation for the Springer correspondence in Proposition \ref{p:rossmann}.
\end{prop}

\noindent {\bf Proof.} 
This is a general result (and doesn't have anything to do with the dual group).
It follows from Rossmann's theory applied to the partial flag setting.
See \cite[Section 2]{cnt} for a proof.
\qed

\medskip

\begin{prop}
\label{p:geomcount2}
In the setting of Proposition \ref{p:geomcount}, assume further that the
Spaltenstein dual of $\caO^\vee$ is even, and
\[
G^\vee \cdot \caO^\vee_K \subset \SP(\caO^\vee).
\]
Then numbers $c(\caO^\vee_{K^\vee})$ appearing in Proposition \ref{p:geomcount}
are all nonzero.  More precisely,
\[
\dim\Hom_{W(\frl^\vee(\caO^\vee))}\left( \sgn, \Sp(x^\vee)^{A_{G^\vee}(x^\vee)}\right) =1,
\]
and since $A_{K^\vee}(\xi) \rightarrow A_{\vG}(\xi)$,
\begin{equation}
\label{e:orbitcount}
\dim\Hom_{W(\frl^\vee(\caO^\vee))}\left( \sgn, \Sp(x^\vee)^{A_{K^\vee}(x^\vee)}\right) \geq 1.
\end{equation}
\end{prop}

\noindent{\bf Proof.}  Section 5 of \cite{bv} shows that 
\[
\dim\Hom_{W(\frl^\vee(\caO^\vee))}\left( \sgn, U \right) =1
\]
for an irreducible representation $U$ in the left cell representation $V(\caO^\vee)$.
So the current proposition follows from Proposition \ref{p:cellcomp2}.
\qed

\bigskip

We now return to \eqref{e:ineq}.  By Proposition \ref{p:geomcount2},
\begin{equation}
\label{e:orbitineq}
\sum_i \# \left \{ Q \in K_i^\vee \bs Y^\vee \; \bigr | \; \mu \left ( T^*_Q(Y^\vee) \right) \cap \SP(\caO^\vee)
\right \} \geq 
\sum_i \#\left ( K_i^\vee \backslash (\SP(\caO^\vee) \cap \frs^\vee_i) \right ).
\end{equation}
By \eqref{e:maineq} (which was proved in the previous section), the right-hand side equals
$\dim_\bbZ\bbZ_\st\Pi(\caO^\vee)$.  This proves \eqref{e:ineq}, and hence completes
the proof of Proposition \ref{p:functionals}. 
\end{proof}

\begin{cor}
\label{c:Qcan}
In the setting of Proposition \ref{p:geomcount2}, there exists a unique orbit $Q = Q(\caO^\vee_K)$ of $K^\vee$ on
$Y^\vee$ such that
\[
\mu\left ( \ol{T_{Q}^*(Y^\vee)} \right ) = \ol{\caO_K^\vee}.
\]
\end{cor}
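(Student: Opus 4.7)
The plan is to recast the statement as the equality $c(\caO^\vee_K) = 1$, where $c$ is the invariant introduced in Proposition~\ref{p:geomcount}, and then to extract this equality from the chain of inequalities assembled in the proof of Proposition~\ref{p:functionals}. To begin, I would verify the equivalence, on an individual $K^\vee$-orbit $Q$ in $Y^\vee$, of the two conditions ``$\mu(T_Q^* Y^\vee)$ meets $\caO^\vee_K$ in a dense open set'' and ``$\mu(\ol{T_Q^* Y^\vee}) = \ol{\caO^\vee_K}$''. Since the moment map $\mu \colon T^* Y^\vee \to \frg^\vee$ is proper, the image $\mu(\ol{T_Q^* Y^\vee})$ is closed, irreducible, $K^\vee$-invariant, and lies in the nilpotent cone of $\frs^\vee$; by finiteness of $K^\vee$ orbits on that cone, it must be the closure of a single $K^\vee$-orbit $\caO^\vee_Q$, and the two conditions above both translate into $\caO^\vee_Q = \caO^\vee_K$. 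In particular, the set of orbits $Q$ with $\mu(T_Q^* Y^\vee) \cap \SP(\caO^\vee) \neq \emptyset$ decomposes as the disjoint union over $\caO^\vee_K \in K^\vee_i \bs (\SP(\caO^\vee) \cap \frs^\vee_i)$ of the fibres of the assignment $Q \mapsto \caO^\vee_Q$, of respective sizes $c(\caO^\vee_K)$.

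Granting this, existence of $Q(\caO^\vee_K)$ is immediate from the lower bound $c(\caO^\vee_K) \geq 1$ supplied by Proposition~\ref{p:geomcount2}. For uniqueness I would sum these bounds over $\caO^\vee_K \in \bigcup_i K_i^\vee \bs (\SP(\caO^\vee) \cap \frs^\vee_i)$ to obtain
\[
\sum_i \# \left\{Q \in K_i^\vee \bs Y^\vee \; \bigr| \; \mu(T_Q^* Y^\vee) \cap \SP(\caO^\vee) \neq \emptyset\right\} \; \geq \; \sum_i \# \left(K_i^\vee \bs (\SP(\caO^\vee) \cap \frs^\vee_i)\right).
\]
The right-hand side equals $\dim_\bbZ \bbZ_\st \Pi(\caO^\vee)$ by the equality~\eqref{e:maineq} proved in Section~\ref{s:proof}, while the linear independence of the family $\{\pi(Q)\}$ established near the start of the proof of Proposition~\ref{p:functionals} bounds the left-hand side from above by that same dimension. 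Every inequality in the resulting chain is therefore an equality; combined with $c(\caO^\vee_K) \geq 1$ for every term and the fact that the number of terms equals the total, this forces each individual $c(\caO^\vee_K)$ to equal exactly $1$.

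The main technical hurdle is the equivalence of conditions at the start; after that, the argument is a purely combinatorial squeeze among inequalities already assembled in the preceding sections.
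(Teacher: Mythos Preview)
Your proposal is correct and follows essentially the same route as the paper's proof: both deduce $c(\caO^\vee_K)=1$ from the fact that the inequality \eqref{e:orbitineq} is forced to be an equality by the squeeze between the linear independence established in Proposition~\ref{p:functionals} (giving the upper bound by $\dim_\bbZ\bbZ_\st\Pi(\caO^\vee)$) and the equality \eqref{e:maineq}. You make explicit a few identifications the paper leaves implicit---the equivalence of the two conditions on $Q$ and the partition of the relevant $Q$'s by the assignment $Q\mapsto\caO^\vee_Q$---but the argument is the same.
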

\begin{proof}
The proof of Proposition \ref{p:functionals} shows that the inequality
in \eqref{e:orbitineq} must be an equality.  Hence the inequality
in \eqref{e:orbitcount} in an equality.  Hence the number
$c(\caO^\vee_K)$ in Proposition \ref{p:geomcount} must be 1.  This
proves the corollary.
\end{proof}

{\noindent \bf Proof of Theorem \ref{t:main}.}
In the setting of Theorem \ref{t:main}, given $\caO_K^\vee \in K^\vee_i \bs (\SP(\caO^\vee) \cap \frs_i^\vee)$
define $Q(\caO^\vee_K)$ as in Corollary \ref{c:Qcan}.  In the notation of Proposition 
\ref{p:functionals} set 
\begin{equation}
\label{e:extradef}
\pi(\caO^\vee_K) := \pi(Q(\caO^\vee_K)).
\end{equation}
Then by Proposition \ref{p:functionals}(2),
\[
\left \{ \pi(\caO^\vee_K) \; \bigr | \; \caO^\vee_K \in \bigcup_i K^\vee_i \bs (\SP(\caO^\vee) \cap \frs^\vee_i)
\right \}
\]
is a basis for $\bbZ_\st\Pi(\caO^\vee)$.
\qed

\bibliographystyle{amsalpha}

\end{document}